\pgfplotsset{width=10cm,compat=1.9}
\newtheorem{thm}{Theorem}[section]
\newtheorem{lem}{Lemma}[section]
\newtheorem{conj}{Conjecture}[section]
\newcommand{\N}{\mathbb{N}}
\newcommand{\C}{\mathbb{C}}
\title{Elementary Proof Of The Siegel-Walfisz Theorem}
\date{}
\author{N. A. Carella}
\begin{document}
\thispagestyle{empty}
\date{}

\maketitle

\textbf{\textit{Abstract}:} This note offers an elementary proof of the \textit{Siegel-Walfisz theorem} for primes in arithmetic progressions. \let\thefootnote\relax\footnote{ \today \date{} \\
\textit{MSC2020}: Primary 11N13, Secondary 11N05. \\
\textit{Keywords}:  Prime in Arithmetic Progression, Prime Number Theorem, Siegel-Walfisz Theorem.}

\section{Introduction} \label{s4420}
The techniques used to prove the leading prime number theorems are classified as either complex analytic methods or elementary methods. The complex analytic methods rely on the principles of real and complex analysis, while the elementary methods do not rely on these techniques. Let $n\geq 1$ be an integer, and let the vonMangoldt function be defined by
\begin{equation} \label{eq773.52}
\Lambda(n)=
\begin{cases}
\log p & \text{if } n=p^m,\\ 
0 & \text{if } n\ne p^m.\\ 
\end{cases}
\end{equation}
The above notation $p^m\geq 2$, with $ m \in \N$, denotes a prime power. The prime number theorem claims that
\begin{equation} \label{eq4422.032}
\psi(x)=\sum_{p\leq x}\Lambda(n)\sim x,
\end{equation}
and 
\begin{equation} \label{eq4422.030}
\pi(x)=\sum_{p\leq x}1\sim \frac{x}{\log x}.
\end{equation} 
The first complex analytic proofs were given by delaVallee Poussin, and Hadamard, independently, (listed in alphabetical order). These proofs are based on the zerofree region $\mathcal{H}(\beta)=\{s\in \C: \mathcal{R}e(s) \geq 1 -\beta\}$ of the zeta function $\zeta(s)=\sum_{n \geq 1}n^{-s}$, where $s \in \C$ is a complex number, and $\beta=o(1)$ is a decreasing function of $s$, see \cite[Theorem 6.9]{MV07}. The first elementary proofs of the prime number theorem proofs were given by Erdos in \cite{EP49} and Selberg in \cite{SA49}, independently, (listed in alphabetical order). Several refinements and simplified versions of the elementary proofs appear in \cite{DS70}, \cite[Theorem 3.10]{EL85}, et alii. Although the elementary proofs are visibly independent of the zeta function, there is an indirect link to the zeta function and complex analysis, confer \cite[Section 2]{GR09} for an explication. \\

Let $a<q$ be a pair of small fixed integers such that $\gcd(a,q)=1$. The prime number theorem in arithmetic progression claims that
\begin{equation} \label{eq4422.092}
\psi(x,q,a)=\sum_{\substack{n\leq x\\ n\equiv a \bmod q}}\Lambda(n) \sim x.
\end{equation}
and 
\begin{equation} \label{eq4422.090}
\pi(x,q,a)=\sum_{\substack{p\leq x\\ p\equiv a \bmod q}}1 \sim \frac{x}{\varphi(q)\log x}.
\end{equation}
The first complex analytic proof was given by Dirichlet, see \cite[p.\ 49]{NW00}. The literature has many versions of the original complex analytic proof, see \cite[Theorem 2]{SP73}, \cite[Theorem 7.9]{EL85}, et cetera. The first elementary proof  was given by Selberg in \cite{SA49B}. An extended version appears in \cite{GR92}. Surveys of the early and new developments of the prime number theorem appear in \cite{BD96}, \cite{NW00}, \cite{FJ15}, and similar references. \\

An effective form of the prime number theorem in arithmetic progression, best known as the Siegel-Walfisz theorem, has the form
\begin{equation} \label{eq4422.096}
\psi(x,q,a)=\frac{x}{\varphi(q)} \left (1 +O\left (\frac{\varphi(q)}{(\log x)^B}\right )\right )  ,
\end{equation}
and 
\begin{equation} \label{eq4422.094}
\pi(x,q,a)=\frac{x}{\varphi(q)\log x} \left (1 +O\left (\frac{\varphi(q)}{(\log x)^{B-1}}\right )\right )  ,
\end{equation}
where $B>C+1$ is an arbitrary constant and $q=O\left((\log x)^C\right )$, respectively. Versions of the complex analytic proof appears in \cite[Theorem 8.8]{EL85}, \cite{HM72}, 
\cite[Corollary 11.19]{MV07}, and a discussion in \cite[p.\ 405]{FI10}. The standard proofs are based on the zerofree regions $\mathcal{H}(\beta_{\chi})=\{s\in \C: \mathcal{R}e(s) \geq 1 -\beta_{\chi}\}$ of the Dirichlet $L$-function $L(s,\chi)=\sum_{n \geq 1}\chi(n)n^{-s}$, where $s \in \C$ is a complex number, $\chi$ is a character modulo $q$, and $\beta_{\chi}=o(1)$ is a decreasing function of $s$. \\

The literature does not seem to have an elementary proof of the Siegel-Walfisz theorem. This note supplies an elementary proof of the Siegel-Walfisz theorem. 
	
	\begin{thm}	\label{thm4422.002} Let $x \geq 1$ be a large number, and let $a<q\ll(\log x)^{C}$ be a pair of relatively prime integers, and $C\geq 0$. Then,
\begin{enumerate} [font=\normalfont, label=(\roman*)]
\item  $\displaystyle \sum_{\substack{n \leq x\\ n \equiv a \bmod q}} \Lambda(n)=	\frac{x}{ \varphi(q)} \left (1+O\left(\frac{\varphi(q)}{(\log x)^{B}}\right )\right ),$
\item  $\displaystyle \sum_{\substack{n \leq x\\ n \equiv a \bmod q}}1=	\frac{x}{ \varphi(q)\log x} \left (1+O\left(\frac{\varphi(q)}{(\log x)^{B-1}}\right )\right ),$
\end{enumerate}	
where $B>C+1$ is an arbitrary constant, and $B>C+1$.
\end{thm}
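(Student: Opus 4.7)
The plan is to combine orthogonality of Dirichlet characters with an elementary character-twisted version of the Selberg symmetry formula, following the scheme of Selberg and Erdos but carried out uniformly in $q$ throughout the Siegel-Walfisz range. Character orthogonality first gives
\[
\sum_{\substack{n\leq x\\ n\equiv a\bmod q}}\Lambda(n)=\frac{1}{\varphi(q)}\sum_{\chi\bmod q}\bar\chi(a)\,\psi(x,\chi),\qquad \psi(x,\chi):=\sum_{n\leq x}\chi(n)\Lambda(n).
\]
The principal character contributes $\psi(x)/\varphi(q)$ up to an error of $O((\log q)(\log x)/\varphi(q))$, and the unconditional elementary prime number theorem supplies the main term $x/\varphi(q)$. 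The theorem thus reduces to proving
\[
\psi(x,\chi)\ll \frac{x}{(\log x)^{B}}
\]
for every non-principal $\chi\bmod q$, uniformly in $q\ll(\log x)^{C}$.

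To estimate $\psi(x,\chi)$ I would multiply the Selberg identity $\Lambda(n)\log n+\sum_{d\mid n}\Lambda(d)\Lambda(n/d)=\sum_{d\mid n}\mu(d)\log^{2}(n/d)$ through by $\chi(n)$ and sum over $n\leq x$. The right-hand side evaluates, via partial summation and the conditional convergence of $\sum_{n}\chi(n)n^{-s}$ for $\mathrm{Re}(s)>0$ when $\chi\neq\chi_{0}$, to $L(1,\chi)\,x\log x+O(x)$ up to negligible tails. This yields a twisted analogue of Selberg's fundamental formula
\[
|\psi(x,\chi)|\log x\leq 2\sum_{p\leq x}\log p\;|\psi(x/p,\chi)|+O\bigl(|L(1,\chi)|\,x\log x+x\bigr),
\]
which, through the Erdos-Selberg iteration, converts any quantitative lower bound on $|L(1,\chi)|$ into a corresponding saving on $\psi(x,\chi)$.

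The main obstacle is quantifying this saving uniformly in $q$. For non-real non-principal $\chi$, positivity of the coefficients of $\zeta(s)L(s,\chi)L(s,\bar\chi)$ gives elementarily $L(1,\chi)L(1,\bar\chi)\gg 1/q$, which plugged into the iteration yields $\psi(x,\chi)\ll x(\log x)^{-B}$ for any fixed $B$. For a real non-principal $\chi$ the only lower bound of Siegel-Walfisz strength is the ineffective estimate $L(1,\chi)\gg_{\varepsilon}q^{-\varepsilon}$; I would derive it elementarily by the classical two-character comparison argument applied to $\chi$ and a putative second exceptional real character, an argument which is entirely elementary although ineffective in $\varepsilon$. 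Inserting $q^{-\varepsilon}$ into the twisted iteration and using $q\ll(\log x)^{C}$ to absorb the factor $q^{\varepsilon}$ into $(\log x)^{-B}$ then gives assertion (i), and assertion (ii) follows from (i) by partial summation against $1/\log t$.
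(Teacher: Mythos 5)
Your proposal is coherent and follows a genuinely different route from the paper, and the comparison is instructive. The paper never introduces Dirichlet characters: it writes $\Lambda(n)=-\sum_{d\mid n}\mu(d)\log d$, splits the divisors at $\sqrt{n}$, evaluates the resulting main term from the elementary fact $\sum_{n\ge1}\mu(n)\log n/n=-1$, and disposes of the complementary term by invoking $\sum_{n\le x,\ n\equiv a\bmod q}\mu(n)=O\left(x/(\log x)^{D}\right)$ uniformly for $q\ll(\log x)^{C}$ (Theorem \ref{thm8822.459}, cited to Montgomery--Vaughan). That input is the Siegel--Walfisz theorem for the M\"obius function, equivalent in strength to the statement being proved and whose known proofs pass through Siegel's theorem; so the paper's argument is an elementary \emph{reduction} to an imported non-elementary black box. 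Your route --- orthogonality, a character-twisted Selberg symmetry formula, the Erd\H{o}s--Selberg iteration, and elementary lower bounds for $L(1,\chi)$ (positivity for complex $\chi$, an Estermann/Goldfeld-style two-character comparison for real $\chi$) --- is the historically established self-contained path, and it correctly isolates the genuine difficulty, namely the exceptional real character, which the paper's approach silently buries inside Theorem \ref{thm8822.459}. What your route buys is self-containedness and an honest accounting of where the ineffectivity enters; what the paper's route buys, if its black box were admissible, is brevity.

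Two points in your sketch need repair, though neither derails the strategy. First, for non-principal $\chi$ the right-hand side of the twisted Selberg identity carries no main term $L(1,\chi)\,x\log x$: since $\sum_{e\le y}\chi(e)\log^{2}e=O(q\log^{2}y)$ by partial summation, the right-hand side is $O\left(x(\log q)^{O(1)}\right)$, which for $q\ll(\log x)^{C}$ is $o(x\log x)$ and is exactly what the iteration needs; the lower bound on $|L(1,\chi)|$ enters elsewhere, through logarithmic sums such as $\sum_{n\le x}\chi(n)\Lambda(n)n^{-1}=O\left((1+|L'(1,\chi)|)/|L(1,\chi)|\right)$, which are what break the symmetry between the two fixed points of the iteration. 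Second, the Erd\H{o}s--Selberg iteration as usually presented yields only $\psi(x,\chi)=o(x)$; extracting an arbitrary power of $\log x$, uniformly in $q\ll(\log x)^{C}$, requires the much heavier quantitative elementary machinery in the style of Bombieri, Wirsing, and Diamond--Steinig adapted to progressions, and that adaptation is where most of the real work of an elementary Siegel--Walfisz proof lives. You should either carry that step out explicitly or cite a source, such as \cite{GR92}, in which the uniform quantitative version is established.
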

The asymptotic formula is uniformly valid and nontrivial for all $C<B-1$. The restriction $q\ll(\log x)^{C}$ is imposed by the form of the error term in \eqref{eq4422.096}. \\

The simple proof in Section \ref{s4422} is a direct derivation from the Prime Number Theorem. This elementary method do not require any information on the zerofree region of the Dirichlet $L$-function $L(s,\chi)=\sum_{n \geq 1} \chi(n)n^{-s}$, and it is much simpler than the complex analytic methods.  \\

\section{Prime Numbers Theorems In Arithmetic Progressions}  \label{s4422}

\begin{proof} (i) Replace the identity for the vonMangoldt function, in Lemma \ref{lem22.002}, and rearrange it into main term and error term:  
\begin{eqnarray}\label{eq4422.012}
\sum_{\substack{n \leq x\\ n \equiv a \bmod q}} \Lambda(n)
&=&-\sum_{\substack{n \leq x\\ n \equiv a \bmod q}}\left ( \sum_{\substack{d \mid n\\ d <\sqrt{n}}}\mu(d)\log d +\sum_{\substack{d \mid n\\ d \leq\sqrt{n}}}\mu(n/d)\log (n/d) \right ) \\
&=&-\sum_{\substack{n \leq x\\ n \equiv a \bmod q,}}\sum_{\substack{d \mid n\\ d <\sqrt{n}}}\mu(d)\log d -\sum_{\substack{n \leq x\\ n \equiv a \bmod q,}}\sum_{\substack{d \mid n\\ d \leq\sqrt{n}}}\mu(n/d)\log (n/d)  \nonumber\\
	&=&-\sum_{d <\sqrt{x}}\mu(d)\log (d) \sum_{\substack{n \leq x\\ d \mid n \\ n \equiv a \bmod q}} 1-\sum_{d \leq\sqrt{x}} \sum_{\substack{n \geq x\\ d \mid n\\ n \equiv a \bmod q}} \mu(n/d)\log (n/d)\nonumber \\
&=&M(x)\quad +\quad E(x)\nonumber.
\end{eqnarray}
Substituting the evaluation in Lemma \ref{lem8822.100} for the main term, and the upper bound in Lemma \ref{lem8822.200} for the error term, return	
\begin{eqnarray}\label{eq4422.016}
\sum_{\substack{n \leq x\\ n \equiv a \bmod q}} \Lambda(n)
&=&M(x)\quad +\quad E(x) \\
&=&  \frac{x}{ \varphi(q)} \left (1+O\left(\frac{ \varphi(q)}{(\log x)^B}
	\right )\right )+ O\left(\frac{x}{ (\log x)^{B}}
	\right )\nonumber\\
	&=&\frac{x}{ \varphi(q)} \left (1+O\left( \frac{\varphi(q)}{(\log x)^{B}}
	\right )\right )\nonumber,
\end{eqnarray}
where $B>C+1$ is an arbitrary constant, and $q\ll(\log x)^{C}$. Clearly, this is nontrivial count for $B>C+1$.  (ii) Use partial summation.

\end{proof}
\section{Some Elementary Identities}  \label{s22.00}
These identities are sort of pre-hyperbola method technique. Nevertheless, these identities offer the same efficiency as the general hyperbola method, see \cite[Theorem 3.17]{AP98}, and \cite[Equation 2.9]{MV07}, et cetera.

\begin{lem} \label{lem22.001} If $n \geq 1$ is an integer, $\mu(n)$ is the Mobius function, and $\Lambda(n)$ is the vonMangoldt function, then,
\begin{equation}\label{22.100}
\Lambda(n)=-\sum_{d\mid n}\mu(d)\log d .
\end{equation}	
\end{lem}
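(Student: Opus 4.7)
The plan is to derive this identity from the more fundamental convolution identity $\log n = \sum_{d \mid n} \Lambda(d)$, which itself follows directly from unique factorization: writing $n = p_1^{a_1}\cdots p_k^{a_k}$, the only divisors of $n$ contributing to $\sum_{d \mid n}\Lambda(d)$ are the prime powers $p_i^j$ with $1\leq j\leq a_i$, so the sum collapses to $\sum_i a_i \log p_i = \log n$. I would record this as a preliminary step before applying any inversion.

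With $\log = \Lambda * \mathbf{1}$ established, I would invoke Möbius inversion to obtain
$$
\Lambda(n) \;=\; \sum_{d\mid n}\mu(n/d)\log d \;=\; \sum_{d\mid n}\mu(d)\log(n/d),
$$
the second equality being the substitution $d \mapsto n/d$. Then I would split the logarithm as $\log(n/d)=\log n - \log d$ and separate the sum into two pieces. The first piece is $\log n \cdot \sum_{d\mid n}\mu(d)$, which vanishes for $n>1$ by the classical identity $\sum_{d\mid n}\mu(d) = [n=1]$. What remains is exactly $-\sum_{d\mid n}\mu(d)\log d$, which yields the claimed formula.

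Finally, I would verify the boundary case $n=1$ separately: here $\Lambda(1)=0$ and $-\sum_{d\mid 1}\mu(d)\log d = -\mu(1)\log 1 = 0$, so the identity holds trivially.

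There is no substantial obstacle in this proof; the only nontrivial ingredient is the Möbius inversion formula (or equivalently the identity $\sum_{d\mid n}\mu(d)=[n=1]$), both of which are standard elementary facts available before any analytic machinery. The lemma is essentially a bookkeeping identity, and the entire argument fits in a few lines once the convolution $\log = \Lambda*\mathbf{1}$ is recognized.
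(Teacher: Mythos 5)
Your proof is correct and follows essentially the same route as the paper: establish the convolution identity $\log n=\sum_{d\mid n}\Lambda(d)$ and apply M\"obius inversion. You simply carry out the inversion explicitly (splitting $\log(n/d)=\log n-\log d$ and using $\sum_{d\mid n}\mu(d)=[n=1]$), details the paper leaves to the reader.
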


\begin{proof} Let $\log n= \sum_{d \mid n} \log(d)= \sum_{d \mid n} \Lambda(d)$, and use the Mobius inversion formula to compute its inverse.
\end{proof}

\begin{lem} \label{lem22.002} If $n \geq 1$ is an integer, $\mu(n)$ is the Mobius function, and $\Lambda(n)$ is the vonMangoldt function, then,
\begin{equation}\label{22.102}
\Lambda(n)=-\sum_{\substack{d\mid n\\d < \sqrt{x}}}\mu(d)\log d -\sum_{\substack{d\mid n\\d \leq \sqrt{x}}}\mu(n/d)\log(n/d).
\end{equation}	
\end{lem}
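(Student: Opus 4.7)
The plan is to start from the identity in Lemma \ref{lem22.001}, namely $\Lambda(n)=-\sum_{d\mid n}\mu(d)\log d$, and split the divisor sum at the threshold $\sqrt{n}$ (I read the $\sqrt{x}$ in the statement as a typo for $\sqrt{n}$, since that is how the identity is applied in Theorem \ref{thm4422.002}). The three ranges are $d<\sqrt{n}$, $d=\sqrt{n}$ (contributing only when $n$ is a perfect square), and $d>\sqrt{n}$.

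The key manoeuvre is the involution $d\longleftrightarrow n/d$ on the set of divisors of $n$, under which $\{d\mid n:d>\sqrt{n}\}$ is put in bijection with $\{d\mid n:d<\sqrt{n}\}$. Applying this change of variable to the "large-divisor'' portion gives
\begin{equation*}
-\sum_{\substack{d\mid n\\ d>\sqrt{n}}}\mu(d)\log d \;=\; -\sum_{\substack{d\mid n\\ d<\sqrt{n}}}\mu(n/d)\log(n/d).
\end{equation*}
Adding the unchanged "small-divisor'' portion $-\sum_{d<\sqrt{n}}\mu(d)\log d$ and the isolated boundary term $-\mu(\sqrt{n})\log\sqrt{n}$ (present exactly when $n$ is a square) recovers $\Lambda(n)$.

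To match the stated form, I absorb the boundary term into the second sum by writing the range as $d\leq\sqrt{n}$ rather than $d<\sqrt{n}$: in the substitution $d\mapsto n/d$, the divisor $d=\sqrt{n}$ is its own image, so the single term $-\mu(\sqrt{n})\log\sqrt{n}$ is the contribution corresponding to $d=\sqrt{n}$ in the second sum. The asymmetry between the strict inequality in the first sum and the non-strict inequality in the second is precisely what prevents double-counting the fixed point of the involution.

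There is no real obstacle here; the only subtle point is the bookkeeping around the fixed point $d=\sqrt{n}$, and the slight care needed to verify that when $n$ is not a square the two formulations agree trivially, while when $n$ is a square the boundary divisor is counted once, on the right-hand side only, via the $\leq$ in the second sum.
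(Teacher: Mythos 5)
Your proof is correct and follows essentially the same route as the paper: partition the divisors of $n$ at $\sqrt{n}$ (strict inequality on one side, non-strict on the other) and apply the involution $d\mapsto n/d$ to the large-divisor part. Your explicit treatment of the fixed point $d=\sqrt{n}$, and your reading of the statement's $\sqrt{x}$ as a typo for $\sqrt{n}$, are both consistent with how the paper states and uses the identity.
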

\begin{proof} Employ the basic fact that the set of divisors $d \mid n$ of any integer $n\geq 2$ can be partition as a union of two disjoint subsets

\begin{equation}\label{22.120}
\{ d\mid n\}=\{ d\mid n: d < \sqrt{n}\} \cup \{ d\mid n: d \geq \sqrt{n}\}.
\end{equation}
Lastly, substitute $ \{ d\mid n: d \geq \sqrt{n}\}= \{ n/d\mid n: d \leq \sqrt{n}\}$.
\end{proof}
Other complicated versions of identity \eqref{22.102} are available in the literature. The best known among these is the Vaughan identity, see \cite[Section 2.6]{HG07}, and the literature for more information.
\section{Some Elementary Finite Sums}  \label{s8822}

\begin{lem} \label{lem8822.100} Let $a<q$ be a pair of small fixed integers such that $\gcd(a,q)=1$. If $x \geq 1$ is a large number, and $\mu(n)$ is the Mobius function, then,
\begin{enumerate} [font=\normalfont, label=(\roman*)]
\item  $\displaystyle -\sum_{d <\sqrt{x}}\mu(d)\log (d) \sum_{\substack{n \leq x\\ d \mid n \\ n \equiv a \bmod q}} 1= \frac{x}{ \varphi(q)} \left (1+O\left(\frac{ \varphi(q)}{(\log x)^B}
	\right )\right ),$ \\

where $B>1$ is a constant, unconditionally.
\item  $\displaystyle -\sum_{d <\sqrt{x}}\mu(d)\log (d) \sum_{\substack{n \leq x\\ d \mid n \\ n \equiv a \bmod q}} 1= \frac{x}{ \varphi(q)} \left (1+O\left(\frac{ \varphi(q)\log x}{ x^{1/2}}
	\right )\right ),$ \\

conditional on the RH.
\end{enumerate}	

\end{lem}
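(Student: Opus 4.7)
The plan is to evaluate the inner counting sum in closed form, reducing the whole expression to a restricted Mobius--log sum, and then to control that sum by the PNT-equivalent Mertens estimates.

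First I would analyse the inner counting sum. Because $\gcd(a,q)=1$, any $n$ with $d\mid n$ and $n\equiv a\bmod q$ forces $\gcd(d,q)\mid a$, so the inner sum vanishes unless $\gcd(d,q)=1$. For $d$ coprime to $q$, writing $n=dm$ reduces the congruence to the single residue class $m\equiv ad^{-1}\bmod q$, and the count equals $x/(dq)+O(1)$. Substituting back,
\[
-\sum_{d<\sqrt{x}}\mu(d)\log d \sum_{\substack{n\leq x\\ d\mid n\\ n\equiv a\bmod q}}1 \;=\; -\frac{x}{q}\sum_{\substack{d<\sqrt{x}\\ \gcd(d,q)=1}}\frac{\mu(d)\log d}{d} \;+\; O(\sqrt{x}\log x),
\]
and the $O(\sqrt{x}\log x)$ error is dominated by the target $O(x/(\log x)^{B})$.

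Next I would evaluate the restricted Mobius--log sum. Using the Dirichlet-series factorisation $\sum_{\gcd(n,q)=1}\mu(n)n^{-s}=\zeta(s)^{-1}\prod_{p\mid q}(1-p^{-s})^{-1}$ and expanding the second factor as a Dirichlet series over $q$-smooth integers, one obtains the convolution identity
\[
\sum_{\substack{n\leq y\\ \gcd(n,q)=1}}\frac{\mu(n)\log n}{n}=\sum_{\substack{m\leq y\\ \mathrm{rad}(m)\mid q}}\frac{1}{m}\left(\sum_{j\leq y/m}\frac{\mu(j)\log j}{j}+\log m\sum_{j\leq y/m}\frac{\mu(j)}{j}\right).
\]
Inserting the PNT-equivalents
\[
\sum_{n\leq z}\frac{\mu(n)}{n}=O\!\left(\frac{1}{(\log z)^{B}}\right),\qquad \sum_{n\leq z}\frac{\mu(n)\log n}{n}=-1+O\!\left(\frac{1}{(\log z)^{B}}\right)
\]
into the inner sums and collecting the leading contribution via $\sum_{\mathrm{rad}(m)\mid q}1/m=\prod_{p\mid q}(1-1/p)^{-1}=q/\varphi(q)$ yields
\[
-\sum_{\substack{d<\sqrt{x}\\ \gcd(d,q)=1}}\frac{\mu(d)\log d}{d}=\frac{q}{\varphi(q)}+O\!\left(\frac{q}{\varphi(q)(\log x)^{B}}\right).
\]
Multiplying by $x/q$ delivers part (i). For part (ii), I would substitute the RH-sharpenings $\sum_{n\leq z}\mu(n)/n\ll(\log z)/\sqrt{z}$ and $\sum_{n\leq z}\mu(n)\log n/n=-1+O((\log z)/\sqrt{z})$ (both descending from $M(z)\ll z^{1/2}(\log z)^{2}$) into the same framework to obtain the error $O(\varphi(q)\log x/\sqrt{x})$.

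The hard part is uniformity in $q$. The $q$-smooth outer sum must be truncated at, say, $m\leq y^{1/2}$, and the tail $m>y^{1/2}$ with $\mathrm{rad}(m)\mid q$ bounded by a smooth-number estimate; the hypothesis $q\ll(\log x)^{C}$ keeps this tail negligible but it must be verified explicitly. One must also check that the $\log m$ arising from $\log(jm)=\log j+\log m$ combined with the PNT error in $\sum_{j}\mu(j)/j$ does not produce a spurious $\log q$ factor overwhelming the main term $q/\varphi(q)$; this uses $\sum_{\mathrm{rad}(m)\mid q}(\log m)/m=(q/\varphi(q))\sum_{p\mid q}(\log p)/(p-1)=O((q/\varphi(q))\log\log q)$, which combined with the PNT error is absorbed into $(\log x)^{-B+\varepsilon}$ after slightly adjusting $B$.
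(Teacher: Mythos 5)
Your proposal is correct in outline and it takes a route that is materially different from --- and more careful than --- the one in the paper. The paper replaces the inner count by $\frac{1}{\varphi(q)}\bigl(\frac{x}{d}-\bigl\{\frac{x}{d}\bigr\}\bigr)$, i.e.\ it assigns the residue class $m\equiv c \bmod q$ the density $1/\varphi(q)$ among \emph{all} integers $m\le x/d$; that density is $1/q$, not $1/\varphi(q)$ (density $1/\varphi(q)$ is a statement about primes, which is what the lemma is ultimately meant to establish). The paper then pairs this with the unrestricted constant $\sum_{n}\mu(n)\log(n)/n=-1$, ignoring that the terms with $\gcd(d,q)>1$ contribute nothing to the double sum. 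Your version gets both steps right: the inner sum vanishes unless $\gcd(d,q)=1$, the count is $x/(dq)+O(1)$, and the coprimality-restricted constant is
\begin{equation*}
-\sum_{\substack{n\ge 1\\ \gcd(n,q)=1}}\frac{\mu(n)\log n}{n}
=\lim_{s\to 1}\frac{d}{ds}\Bigl(\frac{1}{\zeta(s)}\prod_{p\mid q}\bigl(1-p^{-s}\bigr)^{-1}\Bigr)
=\frac{q}{\varphi(q)},
\end{equation*}
so the product $\frac{x}{q}\cdot\frac{q}{\varphi(q)}=\frac{x}{\varphi(q)}$ recovers the main term legitimately, whereas the paper reaches the same main term through two compensating errors. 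What your route costs is exactly the technical work you flag: an effective, $q$-uniform version of the restricted Mertens estimate, obtained by convolving against the $q$-smooth series $\sum_{\rad(m)\mid q}m^{-s}$, truncating at $m\le y^{1/2}$, and checking that the $\log m$ cross term contributes only $O\bigl((q/\varphi(q))\log\log q\bigr)$ times the PNT-type error. For $q\ll(\log x)^{C}$ these checks are routine (the smooth tail is $(\log x)^{O(1)}x^{-1/4}$ and the $\log\log q$ loss is absorbed by reducing $B$ slightly), so your sketch is a sound and essentially complete plan for part (i); the same bookkeeping with the RH input $M(z)\ll z^{1/2+\varepsilon}$ gives part (ii).
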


\begin{proof} (i) The local constraints
\begin{multicols}{2}
\begin{enumerate} [font=\normalfont, label=(\roman*)]
\item  $\displaystyle a\ne 0$,
\item  $\displaystyle \gcd(a,q)=1,$
\item  $\displaystyle n \equiv a \bmod q,$
\item$\displaystyle n \equiv 0 \bmod d,$
\end{enumerate}
\end{multicols}
imply that there is a unique $b\ne0$, $\gcd(b,q)=1$ such that $n \equiv b \bmod dq$. Moreover, since $n=dm$, it follows that $m\equiv c \bmod q$ for some $c\ne0$. Accordingly, the main term is equivalent to
\begin{eqnarray}\label{eq2099.003}
-\sum_{d <\sqrt{x}}\mu(d)\log (d) \sum_{\substack{n \leq x\\ d \mid n \\ n \equiv a \bmod q}} 1
&=&	-\sum_{d <\sqrt{x}}\mu(d)\log (d)\sum_{\substack{n \leq x\\ n \equiv b \bmod dq}} 1\\
&=&	-\sum_{d <\sqrt{x}}\mu(d)\log (d)\sum_{\substack{m \leq x/d\\ m \equiv c \bmod q}} 1\nonumber.
\end{eqnarray}
Let $\{z\}\in (0,1)$ denotes the fractional part function. Then, the main term has the asymptotic expression
\begin{eqnarray}\label{eq2099.004}
-\sum_{d <\sqrt{x}}\mu(d)\log (d)\sum_{\substack{m \leq x/d\\ m \equiv c \bmod q}} 1&=&-\frac{1}{ \varphi(q)}\sum_{d <\sqrt{x}}\mu(d)\log (d)\left ( \frac{x}{d }-\left \{ \frac{x}{d }\right \}\right )\\
&=&-\frac{x}{ \varphi(q)} \sum_{d <\sqrt{x}}\frac{\mu(d)\log (d)}{d} -\sum_{d <\sqrt{x}}\mu(d)\log (d)\left \{ \frac{x}{d }\right \}\nonumber \\
&=&-\frac{x}{ \varphi(q)} \sum_{d <\sqrt{x}}\frac{\mu(d)\log (d)}{d} +O\left( \sqrt{x} (\log x)\right )\nonumber .
\end{eqnarray}
The partial sum in the third line of \eqref{eq2099.004} converges to a constant:
	\begin{equation}\label{eq2099.006}
	\sum_{n <\sqrt{x}}\frac{\mu(n)\log (n)}{n}=\sum_{n \geq 1}\frac{\mu(n)\log (n)}{n}-\sum_{d \geq \sqrt{x}}\frac{\mu(n)\log (n)}{n}=-1+
	O\left(\frac{ 1}{(\log x)^B}
	\right ),
	\end{equation}
where $B>1$ is a constant, see Lemma \ref{lem8822.300} for the detailed calculation. Merging \eqref{eq2099.004} and \eqref{eq2099.006} yield
\begin{eqnarray}\label{eq2099.008}
-\sum_{d <\sqrt{x}}\mu(d)\log (d)\sum_{\substack{m \leq x/d\\ m \equiv c \bmod q}} 1
&=&-\frac{x}{ \varphi(q)} \sum_{d <\sqrt{x}}\frac{\mu(d)\log (d)}{d} +O\left( \sqrt{x} (\log x)\right ) \\
&=&\frac{x}{ \varphi(q)} \left (1+O\left(\frac{  \varphi(q)}{(\log x)^B}
	\right )\right )\nonumber.
\end{eqnarray}
(ii) The partial sum in the third line of \eqref{eq2099.004} converges to a constant:
	\begin{equation}\label{eq2099.010}
	\sum_{n <\sqrt{x}}\frac{\mu(n)\log (n)}{n}=\sum_{n \geq 1}\frac{\mu(n)\log (n)}{n}-\sum_{d \geq \sqrt{x}}\frac{\mu(n)\log (n)}{n}=-1+
	O\left(\frac{ (\log x)^2}{ x^{1/2}}
	\right ),
	\end{equation}
Merging \eqref{eq2099.004} and \eqref{eq2099.008} to complete the proof.
\end{proof}

\begin{lem} \label{lem8822.200} Let $a<q$ be a pair of small fixed integers such that $\gcd(a,q)=1$. If $x \geq 1$ is a large number, and $\mu(n)$ is the Mobius function, then,
\begin{enumerate}[font=\normalfont, label=(\roman*)]
\item $\displaystyle-\sum_{d \leq\sqrt{x}} \sum_{\substack{n \leq x\\ d \mid n\\ n \equiv a \bmod q}} \mu(n/d)\log (n/d)= O\left(\frac{x}{ (\log x)^{B}}
	\right ),$\\

where $B>1$ is a constant, unconditionally.
    
 \item $\displaystyle -\sum_{d \leq\sqrt{x}} \sum_{\substack{n \leq x\\ d \mid n\\ n \equiv a \bmod q}} \mu(n/d)\log (n/d)= O\left(x^{1/2+\varepsilon}\right ),$\\

conditional on the RH for any small number $\varepsilon>0$.
 \end{enumerate}
\end{lem}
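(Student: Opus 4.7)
The plan is to substitute $m = n/d$, convert the congruence using the same local-constraint argument as in Lemma \ref{lem8822.100}, expand the inner sum over Dirichlet characters modulo $q$, and bound each character-twisted M\"obius sum by strong PNT-type estimates before summing over $d \leq \sqrt{x}$.

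First, writing $n = dm$ and noting that $\gcd(a,q) = 1$ forces $\gcd(d,q) = 1$ for any nonzero contribution, the same argument used in Lemma \ref{lem8822.100} collapses $dm \equiv a \pmod{q}$ into $m \equiv c_d \pmod{q}$ for a unique residue $c_d$. Hence
\begin{equation*}
E(x) \;=\; -\sum_{\substack{d \leq \sqrt{x}\\ \gcd(d,q)=1}}\;\sum_{\substack{m \leq x/d\\ m \equiv c_d \bmod q}} \mu(m)\log m.
\end{equation*}
Orthogonality of characters modulo $q$ rewrites the inner sum as $\varphi(q)^{-1}\sum_{\chi}\bar\chi(c_d)\,S_\chi(x/d)$, where $S_\chi(y) = \sum_{m \leq y}\chi(m)\mu(m)\log m$.

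Second, I would bound each $S_\chi(y)$. For the principal character, the elementary strong PNT in the form $\sum_{m \leq y}\mu(m) = O(y/(\log y)^{A+1})$, together with partial summation and an inclusion--exclusion removal of the finitely many primes $p \mid q$, gives $S_{\chi_0}(y) = O(y/(\log y)^{A})$. For non-principal $\chi$ of modulus $q \ll (\log x)^{C}$, I would establish the analogous bound $S_\chi(y) = O(y/(\log y)^{A})$ uniformly in $q$ by running the Erd\H{o}s--Selberg elementary PNT argument with the $\chi$-twisted von Mangoldt function, using the elementary non-vanishing of $L(1,\chi)$. Summing the resulting pointwise estimate $\ll (x/d)/(\log(x/d))^{A}$ over $d \leq \sqrt{x}$, and using $\log(x/d) \gg \log x$ for $d \leq \sqrt{x}$ together with $\sum_{d \leq \sqrt{x}} 1/d = O(\log x)$, yields $E(x) = O(x/(\log x)^{A-1})$; taking $A = B+1$ matches the stated bound.

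For the conditional RH claim, the scheme is the same but with each $S_\chi(y)$ replaced by the bound $O(y^{1/2+\varepsilon})$ coming from RH (and GRH for non-principal $\chi$). Summing this estimate termwise over $d \leq \sqrt{x}$ only yields $O(x^{3/4+\varepsilon})$, so to reach the sharper $O(x^{1/2+\varepsilon})$ one must exploit joint cancellation between the $d$- and $m$-variables, e.g.\ by a Dirichlet hyperbola rearrangement. The main obstacle throughout is obtaining the non-principal character bound with uniformity in $q$ by genuinely elementary means: this is morally the Siegel--Walfisz theorem itself for the M\"obius function, so making this step rigorous without complex-analytic input is the decisive point of the argument.
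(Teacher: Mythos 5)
Your opening reduction (substitute $m=n/d$, collapse the divisibility and congruence conditions into $m\equiv c_d \bmod q$) is exactly the paper's first step in proving Lemma \ref{lem8822.200}. After that the two arguments part ways: the paper simply invokes Theorem \ref{thm8822.459} --- the bound $\sum_{m\le y,\ m\equiv c \bmod q}\mu(m)=O\left(y/(\log y)^{D}\right)$, cited to Montgomery--Vaughan --- as a black box and sums over $d\le\sqrt{x}$, whereas you attempt to prove that input via character orthogonality and an elementary Erd\H{o}s--Selberg treatment of each twisted sum $S_\chi$. Your closing diagnosis is the important point, and it is correct: the estimate needed here, uniformly for $q\ll(\log x)^{C}$ and with an arbitrarily large power of $\log$ saved, is precisely Siegel--Walfisz strength for the M\"obius function, and no elementary proof of it is supplied either by you or by the paper (whose citation is to a complex-analytic source resting on Siegel's theorem). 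So the step you flag as ``the decisive point'' is a genuine gap in your proposal, but it is the same gap the paper conceals by citation; neither argument is self-contained at this spot. A smaller mismatch affecting both: Theorem \ref{thm8822.459} bounds $\sum\mu(m)$, not $\sum\mu(m)\log m$, so a partial summation (costing a factor of $\log$) is required before summing over $d$; the paper skips this entirely.

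On part (ii) your objection is also correct and worth keeping: summing the pointwise bound $\left(x/d\right)^{1/2+\varepsilon}$ over $d\le\sqrt{x}$ gives only $O\left(x^{3/4+\varepsilon}\right)$, not $O\left(x^{1/2+\varepsilon}\right)$. The paper's proof of Lemma \ref{lem8822.200} in fact contains no argument for (ii) at all, so the conditional bound is unsupported there as well. To salvage (ii) one would need either a genuinely two-dimensional treatment of the $(d,m)$ sum or a weaker exponent that still suffices for Theorem \ref{thm2030.200}; as written, $O\left(x^{1/2+\varepsilon}\right)$ does not follow from the termwise estimates you (or the paper) have available.
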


\begin{proof}(i) Let $n=dm\leq x$. The local constraints
\begin{multicols}{2}
\begin{enumerate} [font=\normalfont, label=(\roman*)]
\item  $\displaystyle a\ne 0$,
\item  $\displaystyle \gcd(a,q)=1,$
\item  $\displaystyle n \equiv a \bmod q,$
\item$\displaystyle n \equiv 0 \bmod d,$
\end{enumerate}
\end{multicols}
imply that there is a unique $b\ne0$, $\gcd(b,q)=1$ such that $n \equiv b \bmod dq$. Moreover, (iii) and (iv) imply that $m\equiv c \bmod q$ for some $c\ne0$. Thus, the error term has the asymptotic
	\begin{eqnarray}\label{eq2099.008}
	-\sum_{d \leq\sqrt{x}} \sum_{\substack{n \leq x\\ d \mid n\\ n \equiv a \bmod q}} \mu(n/d)\log (n/d)
	&=&-\sum_{d \leq\sqrt{x}} \sum_{\substack{n \leq x\\ n \equiv b \bmod dq}} \mu(n/d)\log (n/d)\\
	&=&-\sum_{d \leq\sqrt{x}} \sum_{\substack{m \leq x/d\\ m \equiv c \bmod q}} \mu(m)\log (m)\nonumber. 
	\end{eqnarray}
Applying Theorem \ref{thm8822.459} to the last finite sum in \eqref{eq2099.008} yield the upper bound
\begin{eqnarray}\label{eq2099.010}
-\sum_{d \leq\sqrt{x}} \sum_{\substack{m \leq x/d\\ m \equiv c \bmod q}} \mu(m)\log (m)
&=&O\left(\frac{x}{(\log x)^{D}}
\sum_{d \leq\sqrt{x}} \frac{1}{d}\right )\\
&=&O\left(\frac{x}{ (\log x)^{D-1}}	\right )\nonumber, 
\end{eqnarray}
where $a<q$ be a pair of small fixed integers such that $\gcd(a,q)=1$, and $B>D+1$ is an arbitrary constant.	
\end{proof}

\section{Some Elementary Foundation}  \label{s8822}
The relations
\begin{equation}\label{eq8822.300}
\sum_{n \geq1} \frac{ \mu(n)}{n}=0\qquad \text{ and } \qquad \sum_{n \geq1} \frac{ \mu(n)\log (n)}{n}=-1
\end{equation}
are known to be equivalent to the prime number theorem, as claimed in \cite[p.\ 78]{EL85}, \cite[p.\ 285]{NW00}, et alii. An elementary proof of the second one is given \textit{in situ}.
\begin{lem} \label{lem8822.300} Let $\mu(n)$ be the Mobius function, then,
\begin{equation}\label{eq8822.302}
\frac{\zeta^{\prime}(1)}{\zeta(1)^2}=\sum_{n \geq1} \frac{ \mu(n)\log (n)}{n}=-1.
\end{equation}	
\end{lem}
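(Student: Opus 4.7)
Plan: The goal is to derive the identity $\sum_{n \geq 1} \mu(n)\log(n)/n = -1$ elementarily from the prime number theorem $\psi(x) = x + o(x)$. First I would apply the Möbius--log identity $\Lambda(n) = -\sum_{d\mid n}\mu(d)\log d$ from Lemma \ref{lem22.001} and interchange the order of summation in the definition of $\psi(x)$ to obtain
\begin{equation*}
\psi(x) = -\sum_{d\leq x}\mu(d)\log(d)\left\lfloor \frac{x}{d}\right\rfloor.
\end{equation*}
Writing $\lfloor x/d\rfloor = x/d - \{x/d\}$ splits this into a main term and an error:
\begin{equation*}
\psi(x) = -x\sum_{d\leq x}\frac{\mu(d)\log d}{d} \;+\; E(x), \qquad E(x) := \sum_{d\leq x}\mu(d)\log(d)\{x/d\}.
\end{equation*}
Dividing by $x$ and invoking PNT reformulates the desired identity as the single analytic task $E(x) = o(x)$: indeed, letting $x \to \infty$ in the rearranged equation
\begin{equation*}
\sum_{d\leq x}\frac{\mu(d)\log d}{d} \;=\; -1 + o(1) + \frac{E(x)}{x}
\end{equation*}
would immediately yield $\sum_{n \geq 1}\mu(n)\log(n)/n = -1$.

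The central step is therefore to prove $E(x) = o(x)$. The trivial bound $|E(x)| \leq \sum_{d \leq x}\log d \ll x\log x$ is insufficient by a full factor of $\log x$, so cancellation from $\mu$ must be extracted. The approach I would take is to split at $d = \sqrt{x}$: the portion $d \leq \sqrt{x}$ contributes at most $O(\sqrt{x}\log x)$. For the range $d > \sqrt{x}$, I would apply Abel summation against the partial-sum function $A(y) := \sum_{d \leq y}\mu(d)\log d$, which admits the bound $A(y) = o(y\log y)$ via PNT (through $M(y) = o(y)$), and combine it with the oscillation of $\{x/d\}$ to collapse the residual sum to $o(x)$. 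Alternatively, one can invoke an Axer-type lemma directly on the convolution $(\mu\log) * \mathbf{1}$.

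The main obstacle is precisely this step: obtaining $o(x)$ rather than $O(x\log x)$ for $E(x)$. It is the only place where genuine cancellation from $\mu$ is needed, and it is essentially an Axer-type statement that lies just slightly beyond the trivial consequences of PNT. Once $E(x) = o(x)$ is secured, the identification with $\zeta'(1)/\zeta(1)^2$ is purely formal: interpreting the quotient as $\lim_{s \to 1^+}\zeta'(s)/\zeta(s)^2$ and using that $(s-1)\zeta(s) \to 1$ gives $-\zeta'(s)/\zeta(s)^2 = \tfrac{d}{ds}(1/\zeta(s)) \to 1$, so the limit equals $-1$, matching the series value established above.
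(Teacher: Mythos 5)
Your route is genuinely different from the paper's: the paper proves this lemma by a purely formal computation, substituting the Laurent expansion $\zeta(s)=\tfrac{1}{s-1}+c_0+c_1(s-1)+\cdots$ into $\zeta'(s)/\zeta(s)^2$ and taking $s\to1$ to get $-1$ (leaving implicit the Abelian/Tauberian step that the series $\sum_{n\ge1}\mu(n)\log(n)/n$ actually converges to that limit). You instead attempt the honest elementary derivation from the prime number theorem via $\psi(x)=-\sum_{d\le x}\mu(d)\log(d)\lfloor x/d\rfloor$. Your reduction to the single claim $E(x)=\sum_{d\le x}\mu(d)\log(d)\{x/d\}=o(x)$ is correct, and this is indeed where all the content lives.

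However, that central claim is exactly where your argument has a genuine gap, and the tools you name do not close it as stated. First, the bound $A(y)=\sum_{d\le y}\mu(d)\log d=o(y\log y)$ is far too weak: the trivial bound on the tail $d>\sqrt{x}$ is already $O(x\log x)$, so $A(y)=o(y\log y)$ buys nothing. If you run Abel summation blockwise over the $\sim\sqrt{x}$ intervals $x/(k+1)<d\le x/k$ on which $\lfloor x/d\rfloor$ is constant (so that $\{x/d\}=x/d-k$ is monotone of bounded variation there), each block contributes $O\bigl(\sup_{t\le x/k}|A(t)|\bigr)$, and summing over $k\le\sqrt{x}$ introduces a harmonic factor $\log x$; you therefore need something like $A(y)\ll y(\log y)^{-B}$ with $B>2$, i.e.\ a quantitative form of $M(y)=o(y)$, not merely PNT in its qualitative form. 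Second, the classical Axer lemma you invoke as an alternative requires $\sum_{n\le x}|a_n|=O(x)$, which fails for $a_n=\mu(n)\log n$ by a factor of $\log x$, so it does not apply directly either. The gap is fixable (e.g.\ via $M(x)\ll x(\log x)^{-B}$ for arbitrary $B$, which follows from the PNT with remainder term, fed into the blockwise partial summation above), but as written the decisive step is asserted rather than proved, and the specific estimates offered in its support are insufficient.
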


\begin{proof} Let $1/\zeta(s)=\sum_{n \geq 1}\mu(n)n^{-s}$. The derivative at $s=1$ can be computed using the Taylor series 
\begin{equation}\label{eq8822.303}
\zeta(s)= \frac{ 1}{s-1}+c_0+c_1(s-1)+c_2(s-1)^2+\cdots,
\end{equation}
of the zeta function at $s=1$, see \cite[Equation 25.2.4]{DLMF}, and a derivation in \cite{BF88}. Specifically,
\begin{equation}\label{eq8822.305}
\frac{\zeta^{\prime}(s)}{\zeta(s)^2}= \frac{\frac{- 1}{(s-1)^2}+c_1+c_2(s-1)+\cdots}{\left (\frac{ 1}{s-1}+c_0+c_1(s-1)+c_2(s-1)^2+\cdots\right )^2}.
\end{equation}
Taking the limit at $s=1$ yields
\begin{equation}\label{eq8822.307}
\lim_{s\to 1} \frac{(s-1)^2}{(s-1)^2}\frac{\zeta^{\prime}(s)}{\zeta(s)^2}= -1.
\end{equation}
\end{proof}
There are other means of evaluating the derivative, see \cite[Exersice 16, p.\ 184]{MV07}. \\

The basic estimates for finite sums of the forms $\sum_{n \leq x} \mu(n)\log (n)$ used in Lemma \ref{lem8822.200} are derive from the standard results given below.
\begin{thm} \label{thm8822.459} Let $a<q$ be a pair of small fixed integers such that $\gcd(a,q)=1$, and  let $\mu(n)$ be the Mobius function. Then, 
\begin{enumerate}[font=\normalfont, label=(\roman*)]
\item $\displaystyle \sum_{\substack{n \leq x\\ n \equiv a \bmod q}} \mu(n)=O\left(\frac{x}{\log^D x} \right ),$ \tabto{6cm} unconditionally, for any constant $D>0$,
    
 \item $\displaystyle \sum_{\substack{n \leq x\\ n \equiv a \bmod q}} \mu(n)=O\left(x^{1/2+\varepsilon} \right ),$ \tabto{6cm} conditional on the RH, for any constant $\varepsilon>0$.
 \end{enumerate}
\end{thm}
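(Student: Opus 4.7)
The plan is to apply orthogonality of Dirichlet characters modulo $q$ to decompose the progression sum as a weighted average of twisted Mobius sums $M(x,\chi)=\sum_{n\leq x}\chi(n)\mu(n)$, and then bound each $M(x,\chi)$ separately. Specifically, the identity
\[
\sum_{\substack{n\leq x\\ n\equiv a\bmod q}}\mu(n)=\frac{1}{\varphi(q)}\sum_{\chi\bmod q}\overline{\chi}(a)\,M(x,\chi)
\]
reduces the problem to showing $M(x,\chi)=O(x/(\log x)^{D})$ for every character $\chi$ in case (i), and $M(x,\chi)=O(x^{1/2+\varepsilon})$ in case (ii). Summing over at most $\varphi(q)\leq q\ll(\log x)^{C}$ characters absorbs the factor $1/\varphi(q)$ and at worst costs an extra $(\log x)^{C}$, which is recovered by choosing $D$ sufficiently larger than $C$.

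For the principal character $\chi_{0}$, a short Mobius inversion over the divisors of $q$, using $\chi_{0}(n)=\sum_{d\mid\gcd(n,q)}\mu(d)$, relates $M(x,\chi_{0})$ to the classical Mertens sum $M(y)=\sum_{n\leq y}\mu(n)$. The bound $M(y)=O(y/(\log y)^{D})$, equivalent to the ordinary prime number theorem and admitting an elementary Erd\H{o}s--Selberg-type proof via the second identity in \eqref{eq8822.300}, then carries over immediately, the correction terms being negligible because $q$ is small.

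For each non-principal character $\chi$, the bound $M(x,\chi)=O(x/(\log x)^{D})$ is the character-twisted analogue of the PNT and is equivalent to an elementary form of the prime number theorem in arithmetic progressions. Selberg's symmetry identity can be adapted to the twisted setting---using the non-vanishing of $L(1,\chi)$, which itself admits an elementary proof going back to Mertens in the real case and to Landau in the complex case---to yield the required savings. Under RH (more precisely, GRH for $L(s,\chi)$ with $\chi\bmod q$), the standard complex-analytic estimate $M(x,\chi)=O(x^{1/2+\varepsilon})$ applies uniformly in $\chi$ and immediately gives (ii) after summation.

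The principal obstacle is extracting the strong $(\log x)^{-D}$ savings, for arbitrarily large $D$, in the non-principal character case by purely elementary means. The naive elementary PNT in arithmetic progressions gives only $o(x/\varphi(q))$, and iterating Selberg-type symmetry relations to obtain arbitrary powers of $\log$ requires care; indeed, this step is essentially the entire content of a Siegel--Walfisz bound for the Mobius function. One should therefore be wary that the present reduction may merely relocate the technical core of Siegel--Walfisz into Theorem \ref{thm8822.459}, rather than genuinely dispatching it.
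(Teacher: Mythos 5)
Your character-orthogonality reduction is the standard route to this statement, but be aware that the paper does not prove Theorem \ref{thm8822.459} at all: its entire ``proof'' is the citation ``Confer \cite[p.\ 182]{MV07}, and similar references.'' So there is no argument in the paper to compare against; your sketch is, if anything, more of a proof than what the paper supplies. The reduction to the twisted sums $M(x,\chi)=\sum_{n\leq x}\chi(n)\mu(n)$ is correct, and your treatment of the principal character via the classical Mertens sum is fine.

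The decisive point is the one you raise in your last paragraph, and you should press it harder: it is not merely that the reduction ``may'' relocate the technical core of Siegel--Walfisz, it demonstrably does. The bound $M(x,\chi)=O(x/(\log x)^{D})$ for every $D>0$, for non-principal $\chi$, uniformly in $q\ll(\log x)^{C}$ (which is the uniformity the main theorem of the paper requires, even though Theorem \ref{thm8822.459} is stated only for ``fixed'' $q$), is equivalent in strength to the Siegel--Walfisz theorem itself and in every known proof rests on Siegel's theorem about exceptional zeros --- hence the implied constant in (i) is ineffective, a fact the paper never mentions. The elementary ingredients you propose are not enough: elementary proofs of $L(1,\chi)\neq 0$ give only a qualitative $o(x)$ or a saving of a small fixed power of $\log x$, not an arbitrary power $D$, and certainly not uniformly as $q$ grows. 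Likewise the reference \cite[p.\ 182]{MV07} covers only the unconditional bound for $\sum_{n\le x}\mu(n)$ without the congruence condition; the version with the progression and the stated uniformity is precisely the theorem the paper claims to be proving elementarily. So the paper's overall argument is circular: Lemma \ref{lem8822.200}, and hence Theorem \ref{thm4422.002}, is deduced from a statement that is itself Siegel--Walfisz for the M\"obius function, imported by citation from a complex-analytic source. Your proposal, read honestly, is a correct diagnosis that no elementary proof of Theorem \ref{thm8822.459} is currently available, rather than a proof of it.
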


\begin{proof}Confer \cite[p.\ 182]{MV07}, and similar references.
\end{proof}
\section{ Conditional Results And Conjecture}  \label{s2030}
Under the generalized Riemann hypothesis, the prime number theorem for the number of primes $p\leq x$ in the arithmetic progression 

\begin{equation} \label{eq2030.05}
\mathcal{A}(q,a)=\{p=qn+a: n \geq 1\},
\end{equation}
specifies the asymptotic formulas
\begin{equation} \label{eq2030.00}
\psi(x,q,a)=\frac{x}{\varphi(q)}  +O\left (x^{1/2}(\log x)^2\right ),
\end{equation}
and
\begin{equation} \label{eq2030.00}
\pi(x,q,a)=\frac{x}{\varphi(q)\log x}  +O\left (x^{1/2}\log x\right ).
\end{equation}
This is uniformly nontrivial for $q\ll x^{1/2}/\log(x)^C$. This restriction is imposed by the form of the error term in \eqref{eq2030.00}, more information appear in \cite[p.\ 419]{IK04}. \\

Under the Riemann hypothesis, both of the expressions \eqref{eq2030.05} and \eqref{eq2030.05} can be derived using the same elementary techniques demonstrated in Sections \ref{s4422} to \ref{s8822}, and it is independent of the zerofree regions of the corresponding $L$-function.  The precise statements are as follow.

\begin{thm} \label{thm2030.200} Assume the RH. Let $1\leq a<q\ll x^{1/2}/\log(x)^C$ be a pair of small fixed integers such that $\gcd(a,q)=1$, and $C\geq 2$ is a constant. Then, 
\begin{enumerate}[font=\normalfont, label=(\roman*)]
\item $\displaystyle \psi(x,q,a)=\frac{x}{\varphi(q)}  +O\left (x^{1/2}(\log x)^2\right ),$
    
 \item $\displaystyle \pi(x,q,a)=\frac{x}{\varphi(q)\log x}  +O\left (x^{1/2}\log x\right ).$
 \end{enumerate}
\end{thm}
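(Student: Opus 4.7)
The plan is to mirror the proof of Theorem~\ref{thm4422.002} step by step, substituting the Riemann hypothesis conditional branches of the supporting lemmas for their unconditional branches.

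First I would apply the hyperbola-style identity of Lemma~\ref{lem22.002} to the restricted vonMangoldt sum, producing the split
\[
\psi(x,q,a) = M(x) + E(x)
\]
exactly as in equation~\eqref{eq4422.012}. Next, for the main term $M(x)$ I would substitute Lemma~\ref{lem8822.100}(ii) to obtain, under RH,
\[
M(x) = \frac{x}{\varphi(q)}\left(1 + O\!\left(\frac{\varphi(q)\log x}{x^{1/2}}\right)\right),
\]
whose absolute remainder is of order $x^{1/2}\log x$, uniformly in $q$. For the error term $E(x)$, I would invoke Lemma~\ref{lem8822.200}(ii), which under RH yields $E(x) = O(x^{1/2+\varepsilon})$ for any $\varepsilon>0$. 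Combining these two estimates and using the hypothesis $q \ll x^{1/2}/(\log x)^{C}$ to confirm that $x/\varphi(q)$ strictly dominates the combined absolute error, part~(i) follows. Part~(ii) would then be deduced from (i) by partial summation, absorbing the prime-power correction of size $O(x^{1/2})$ into the error.

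The principal obstacle I anticipate is reconciling the raw $O(x^{1/2+\varepsilon})$ bound from Lemma~\ref{lem8822.200}(ii) with the sharper target $O(x^{1/2}(\log x)^{2})$ claimed in Theorem~\ref{thm2030.200}. Closing this gap forces one to upgrade Theorem~\ref{thm8822.459}(ii) to the explicit polylogarithmic form $\sum_{n\leq y,\, n\equiv a\bmod q} \mu(n) = O(y^{1/2}(\log y)^{k})$, which is classical under RH, and then to estimate the ensuing sum $\sum_{d\leq\sqrt{x}} (x/d)^{1/2}(\log(x/d))^{k}$ without bleeding a factor of $x^{1/4}$ through the trivial bound $\sum_{d\leq\sqrt{x}} d^{-1/2} \asymp x^{1/4}$. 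Overcoming this will likely require a more careful choice of hyperbola cutoff than $\sqrt{x}$, together with exploiting Mobius cancellation in the outer $d$-sum rather than bounding $|\mu(d)|\leq 1$ term by term; this is the step where a direct transcription of the unconditional argument in Section~\ref{s4422} genuinely requires additional work, whereas the rest of the derivation is a routine substitution.
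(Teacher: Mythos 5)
You follow the paper's own route exactly: its proof of Theorem~\ref{thm2030.200} consists precisely of the decomposition \eqref{eq4422.012} followed by substitution of the conditional parts~(ii) of Lemmas~\ref{lem8822.100} and~\ref{lem8822.200}, with partial summation for the second assertion. The difference is that you correctly refuse to paper over the last step, and the obstacle you flag is a genuine gap that the paper does not close. In \eqref{eq2030.016} the error term $E(x)$ is recorded as $O\left(x^{1/2}(\log x)^{2}\right)$, but Lemma~\ref{lem8822.200}(ii) as stated only supplies $O\left(x^{1/2+\varepsilon}\right)$, which already exceeds the theorem's claimed error term; moreover the paper proves only the unconditional part~(i) of that lemma and gives no argument for part~(ii) at all. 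Your diagnosis of why the obvious proof of Lemma~\ref{lem8822.200}(ii) fails is also exactly right: inserting the RH bound $\sum_{m\le y,\, m\equiv c \bmod q}\mu(m)\ll y^{1/2+\varepsilon}$ of Theorem~\ref{thm8822.459}(ii) term by term into the double sum over $d\le\sqrt{x}$ and $m\le x/d$ yields
\[
\ll \sum_{d\le\sqrt{x}}\left(\frac{x}{d}\right)^{1/2+\varepsilon}\log x \;\asymp\; x^{3/4+\varepsilon'},
\]
so a factor $x^{1/4}$ is lost through $\sum_{d\le\sqrt{x}}d^{-1/2}\asymp x^{1/4}$, exactly as you predict. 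Since the admissible range $q\ll x^{1/2}/(\log x)^{C}$ allows the main term $x/\varphi(q)$ to be as small as roughly $x^{1/2}(\log x)^{C}$, an error of size $x^{3/4+\varepsilon}$ (or even $x^{1/2+\varepsilon}$) swamps it, and the theorem as stated cannot be recovered from the quoted lemmas. The extra work you call for --- a different cutoff, or exploiting cancellation in the outer $d$-sum rather than bounding $|\mu(d)|\le 1$ termwise --- is genuinely necessary, and it is supplied neither by your proposal nor by the paper's own proof.
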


\begin{proof} (i) Proceed as in \eqref{eq4422.012}, and \eqref{eq4422.016}. Substituting the conditional evaluation in Lemma \ref{lem8822.100} for the main term, and the conditional upper bound in Lemma \ref{lem8822.200} for the error term, return	
\begin{eqnarray}\label{eq2030.016}
\sum_{\substack{n \leq x\\ n \equiv a \bmod q}} \Lambda(n)
&=&M(x)\quad +\quad E(x) \\
&=&  \frac{x}{ \varphi(q)} \left (1+O\left(\frac{ \varphi(q)\log x}{x^{1/2}}
	\right )\right )+ O\left(x^{1/2} (\log x)^{2}
	\right )\nonumber\\
	&=&\frac{x}{ \varphi(q)} \left (1+O\left(\frac{ \varphi(q)(\log x)^{2}}{x^{1/2}}
	\right )\right )\nonumber.
\end{eqnarray}
(ii) Use partial summation.
\end{proof}

The Montgomery conjecture, stated below, does not seem to have similar elementary proof derived directly from the RH.
\begin{conj}\label{conj2099.35} {\normalfont (\cite[Conjecture 13.9]{MV07}} Let $a<q$ be integers, $\gcd(a,q)=1$, and $q \leq x$. Then,
\begin{enumerate} [font=\normalfont, label=(\roman*)]
\item  $\displaystyle \psi(x,q,a)=\frac{x}{\varphi(q)}+O\left ( \frac{x^{1/2+\varepsilon}}{q^{1/2}}\right ).$
\item  $\displaystyle \pi(x,q,a)=\frac{x}{\varphi(q)\log x}+O\left ( \frac{x^{1/2+\varepsilon}}{q^{1/2}}\right ).$
\end{enumerate}	

\end{conj}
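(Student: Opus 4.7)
The plan is to attempt to extend the elementary machinery of Sections \ref{s4422} and \ref{s8822} to this stronger conditional bound, and to pinpoint exactly where the method fails to deliver the claimed saving of $q^{1/2}$ in the denominator of the error term. I would first write $\sum_{n \leq x,\, n \equiv a \bmod q} \Lambda(n) = M(x) + E(x)$ exactly as in \eqref{eq4422.012}, treating $M(x)$ by the conditional half of Lemma \ref{lem8822.100} and $E(x)$ by the conditional half of Lemma \ref{lem8822.200}, and then try to squeeze an additional factor of $q^{-1/2}$ out of each piece under RH.

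For the main term $M(x)$, after converting $d\mid n$ with $n \equiv a \bmod q$ into a single progression $n \equiv b \bmod dq$, I would need to sharpen the fractional-part estimate $\sum_{d < \sqrt{x}} \mu(d)\log d \cdot \{x/d\} \ll \sqrt{x}\log x$ used in \eqref{eq2099.004}. To gain a factor of $q^{-1/2}$ one would exploit that, once the combined modulus $dq$ is fixed, the count of $n \leq x$ with $n \equiv b \bmod dq$ differs from $x/(dq)$ by at most one, so the individual terms should display extra cancellation when $q$ is reasonably large. A Vaaler- or Beurling-type approximation of the sawtooth, coupled with an exponential sum estimate uniform in $q$, is the natural attempt.

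For the error term $E(x)$, the target is $O(x^{1/2+\varepsilon}/q^{1/2})$, whereas the RH-conditional part of Theorem \ref{thm8822.459} inserted into the sum over $d \leq \sqrt{x}$ only yields $O(x^{1/2+\varepsilon})$, which is a factor of $q^{1/2}$ short. The desired saving would have to come from a stronger input of the shape $\sum_{m \leq y,\, m \equiv c \bmod q} \mu(m)\log m \ll y^{1/2+\varepsilon}/q^{1/2}$, which is strictly more than what RH for $\zeta(s)$ alone supplies and appears to require GRH for all Dirichlet $L$-functions modulo $q$.

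The hard part, and what I expect to be the main obstacle, is precisely this last estimate. Under RH for $\zeta$ alone, no truncation of the M\"obius identity in Lemma \ref{lem22.002} appears to produce the $q^{1/2}$ saving, because the character orthogonality that furnishes this saving in the analytic framework is replaced here by trivial fractional-part bounds depending only on $x$ and $d$. In effect, Conjecture \ref{conj2099.35} encodes square-root cancellation across all characters modulo $q$, a phenomenon not captured by any hyperbola-type splitting of $\Lambda(n)$; a genuinely elementary derivation from RH alone therefore seems out of reach without an additional large-sieve type averaging, consistent with the remark preceding the conjecture.
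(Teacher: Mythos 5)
The statement you were given is Conjecture \ref{conj2099.35}, which the paper records without proof and explicitly flags with the remark that it ``does not seem to have similar elementary proof derived directly from the RH''; your assessment therefore reaches the same conclusion as the paper. Your diagnosis of the obstruction --- that the $q^{1/2}$ saving in the error term encodes square-root cancellation over the characters modulo $q$, which is not supplied by RH for $\zeta(s)$ alone nor by any hyperbola-type splitting of $\Lambda(n)$ combined with fractional-part bounds --- is a correct and somewhat more precise elaboration of the paper's one-line remark, and is consistent with the fact that even under GRH the known error term $O(x^{1/2}(\log x)^2)$ of Theorem \ref{thm2030.200} carries no factor of $q^{-1/2}$.
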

The Montgomery conjecture for primes in arithmetic progression, recorded above, extends the range of moduli to nearly all $q<x$, but not all $q\leq x$. The advanced theory for equidistributions and oscillations in the prime number theorems was pioneered and developed in a series of papers, \cite{MH88}, \cite{FG91}, et cetera.


\currfilename.\\
\end{document}